\providecommand{\R}{\mathbb{R}}
\providecommand{\set}[1]{\left\{#1\right\}}
\begin{document}

\title{Improved conditioning of the Floater--Hormann interpolants
	\thanks{The author is grateful for the support of the National Science Foundation under Grant No. DMS 1547357.}
}

\author{Jeremy K. Mason}

\institute{J.K. Mason \at
	The Ohio State University, Columbus, OH 43206, USA.\\
	\email{jkylemason@gmail.com}           
}

\date{Received: date / Accepted: date}

\maketitle

\begin{abstract}
The Floater--Hormann family of rational interpolants do not have spurious poles or unattainable points, are efficient to calculate, and have arbitrarily high approximation orders. One concern when using them is that the amplification of rounding errors increases with approximation order, and can make balancing the interpolation error and rounding error difficult. This article proposes to modify the Floater--Hormann interpolants by including additional local polynomial interpolants at the ends of the interval. This appears to improve the conditioning of the interpolants and allow higher approximation orders to be used in practice.

\keywords{Linear rational interpolation, Lebesgue constant, Equispaced nodes}
\subclass{65D05 \and 41A05 \and 41A20}
\end{abstract}

\section{Introduction}
\label{sec:introduction}

Let $f: [a, b] \rightarrow \R$ be an unknown function to be interpolated, $x_0, x_1 \dots x_n$ be strictly increasing values in the interval $[a, b]$, and $y_0, y_1 \dots y_n$ be measured function values at these points. Depending on the differentiability of $f$ and the distribution of the $x_i$, any one of a number of interpolants in the literature could be used. For example, if the distribution of the $x_i$ is not fixed, then the interpolating polynomial of minimum degree is accurate and stable when the $x_i$ are chosen as one of the various kinds of Chebyshev points \cite{2013trefethen}. If some of the $y_i$ are outliers and a high degree of differentiability is not required, then spline interpolation is efficient and confines the effect of the outliers to short subintervals \cite{2001deboor}.

Rational interpolants (formed by the ratio of two polynomials) offer an intriguing alternative to polynomials and piecewise polynomials like the ones above. Since the space of rational functions contains that of polynomials, rational interpolants can accurately approximate more diverse function behaviors. The theory of rational interpolants is less developed than that of polynomials though, and some of the available constructions suffer from the appearance of spurious poles on the real line and unattainable points. This is inconvenient enough to have contributed to the historically limited use of rational interpolation in practice.

Among the rational interpolants, the Floater--Hormann family \cite{2007floater} is notable for a provable absence of unattainable points and poles along the real line, high rates of approximation, and a simple construction. Let $p_{i, j}(x)$ be the unique polynomial of minimum degree that passes through the points $(x_i, y_i) \dots (x_j, y_j)$ for $i  < j$, and $\chi_{i, j}(x)$ be defined as
\begin{equation}
\chi_{i, j}(x) = (-1)^{i} \prod_{k = i}^{j} \frac{1}{x - x_k}.
\label{eq:fh_chi}
\end{equation}
Then for any integer $0 \leq d \leq n$, the Floater--Hormann interpolant of degree $d$ is a blend of polynomial interpolants through successive sets of $d + 1$ points:
\begin{equation}
r^{(d)}(x) = \frac{\sum_{i = 0}^{n - d} \chi_{i, i + d}(x) p_{i, i + d}(x)}{\sum_{i = 0}^{n - d} \chi_{i, i + d}(x)}.
\label{eq:fh_blend}
\end{equation}
This form is not often used for computations though, since the barycentric form is more computationally efficient and numerically stable \cite{2004berrut}. The present derivation of the barycentric form closely follows that of Floater and Hormann \cite{2007floater}, but introduces some notation that will be useful in subsequent sections.

The derivation of the barycentric form begins by writing the polynomials $p_{i, j}(x)$ in the Lagrange form:
\begin{equation}
p_{i, j}(x) = \sum_{k = i}^{j} \prod_{\substack{l = i \\ l \neq k}}^{j} \frac{x - x_l}{x_k - x_l} y_k.
\label{eq:lagrange}
\end{equation}
Let $t^{(d)}(x)$ be the numerator of $r^{(d)}(x)$ in Equation \ref{eq:fh_blend}. Substituting the definitions of $\chi_{i, j}(x)$ and $p_{i, j}(x)$ from Equations \ref{eq:fh_chi} and \ref{eq:lagrange} and canceling common factors gives
\begin{equation}
t^{(d)}(x) = \sum_{i = 0}^{n - d} \sum_{j = i}^{i + d} \frac{(-1)^{i}}{x - x_j} \prod_{\substack{k = i \\ k \neq j}}^{i + d} \frac{1}{x_j - x_k} y_j.
\label{eq:fh_numerator}
\end{equation}
It will be convenient in the following to introduce a symbol for the barycentric weights, i.e., the constants in the inner summation:
\begin{equation*}
\omega_{i, j, k} = \prod_{\substack{l = i \\ l \neq j}}^{k} \frac{1}{x_j - x_l}.
\end{equation*}
Exchanging the order of the summations in Equation \ref{eq:fh_numerator} and defining the Floater--Hormann weights as
\begin{equation*}
\xi_{j}^{(d)} = \sum_{i = \max(0, j - d)}^{\min(j, n - d)} (-1)^{i} \omega_{i, j, i + d}
\end{equation*}
allows the numerator of the interpolant to be written as
\begin{equation}
t^{(d)}(x) = \sum_{j = 0}^{n} \frac{\xi_{j}^{(d)}}{x - x_j} y_j.
\label{eq:fh_bary}
\end{equation}
The denominator of the interpolant can be found by requiring that a constant function be interpolated exactly. That is, the denominator is the same as the numerator when all of the $y_j$ are equal to one. This gives
\begin{align}
r^{(d)}(x) & = \sum_{j = 0}^{n} \bigg( \frac{\xi_{j}^{(d)}}{x - x_j} \bigg/ \sum_{k = 0}^{n} \frac{\xi_{k}^{(d)}}{x - x_k} \bigg) y_j 
\label{eq:fh_bary_full} \\
& = \sum_{j = 0}^{n} \beta^{(d)}_j(x) y_j
\label{eq:fh_basis}
\end{align}
for the barycentric form of the Floater--Hormann interpolant of degree $d$, where $\beta^{(d)}_j(x)$ is the $j$th basis function.

\begin{figure}
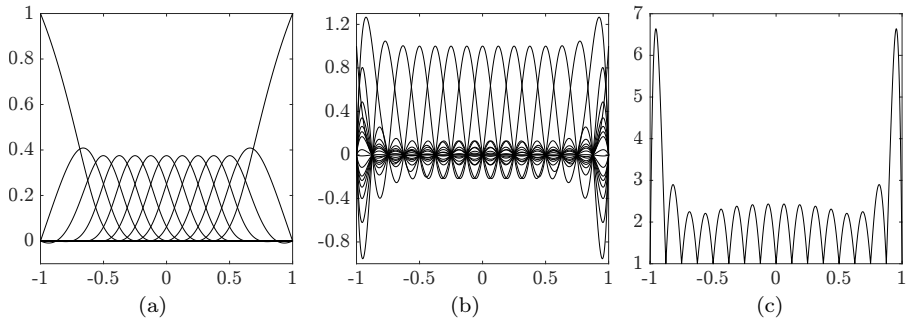

\center
\subfloat[]{%
	\label{subfig:fh_blending}{%
		\includegraphics[height=37mm]{%
			figure_1a}}} \quad
\subfloat[]{%
	\label{subfig:fh_basis}{%
		\includegraphics[height=37mm]{%
			figure_1b}}} \quad
\subfloat[]{%
	\label{subfig:fh_lebesgue}{%
		\includegraphics[height=37mm]{%
			figure_1c}}}
\caption{\label{fig:fh_example}For the Floater--Hormann interpolant with $d = 4$ and $n = 16$ for equispaced nodes in the interval $[-1, 1]$, (a) the blending functions $\chi_{i, i + d}(x) \big/ \sum_{j = 0}^{n - d} \chi_{j, j + d}(x)$ in Equation \ref{eq:fh_blend}, (b) the basis functions $\beta^{(4)}_j(x)$ in Equation \ref{eq:fh_basis}, and (c) the Lebesgue function $\lambda^{(4)}(x)$.}
\end{figure}

The properties of the Floater--Hormann interpolants for equispaced $x_i$ are of particular interest since this situation arises often in practice and is difficult for polynomial interpolants. For specificity, consider the Floater--Hormann interpolant with $d = 4$ and $n = 16$ for equispaced nodes in the interval $[-1, 1]$. The basis functions $\beta^{(4)}_j(x)$ appear in Figure \ref{subfig:fh_basis}, and the Lebesgue function $\lambda^{(4)}(x) = \sum_{j} | \beta^{(4)}_j(x) |$ in Figure \ref{subfig:fh_lebesgue}. The Lebesgue function effectively indicates the relative condition number of the interpolant, i.e., the sensitivity to measurement or rounding errors. This reveals one aspect of the Floater--Hormann interpolants that could be improved---the conditioning degrades at the ends of the interval. More specifically, the supremum of the Lebesgue function (the Lebesgue constant) increases exponentially with $d$ \cite{2012bos}. Since the rate of approximation also increases with $d$, one of the main concerns when using Floater--Hormann interpolants is finding a value of $d$ that appropriately balances the interpolation error and the rounding error \cite{2012guttel}.

It is worthwhile to consider the source of this ill-conditioning. The definition of the Lebesgue function and Figure \ref{subfig:fh_basis} reveal that it is caused by alternating oscillations in the basis functions at the ends of the interval. The reason that this occurs only at the ends of the interval and not on the interior is most easily seen from Equation \ref{eq:fh_blend}, where the functions $\chi_{i, i + d}(x) \big/ \sum_{j = 0}^{n - d} \chi_{j, j + d}(x)$ blend the local polynomial interpolants. Figure \ref{subfig:fh_blending} shows that while the blending functions on the interior decay rapidly enough to damp the oscillations of the local polynomial interpolants, the blending functions at the ends of the interval do not decay at all. This suggests that the source of the ill-conditioning is a deficit of local interpolants at the ends of the interval.

One proposal to improve the conditioning of the Floater--Hormann interpolants extrapolates to points outside of the interval, and uses a Floater--Hormann interpolant on the extended set of points \cite{2013klein}. This does resolve the source of the ill-conditioning identified above, but at the cost of introducing instability by the extrapolation process \cite{2017decamargo}. There is evidence that even when the extrapolated points are computed in multiple precision arithmetic, the effect of measurement and rounding errors in the initial $y_i$ can negate any advantage of this approach \cite{2017decamargo}. Moreover, the extrapolation process obscures the dependence of the interpolants on the initial points, makes explicit basis functions difficult to construct, and complicates the study of the Lebesgue function. All of this means that some other procedure to improve the conditioning of the Floater--Hormann interpolants is greatly desired.

\section{An Alternative Interpolant}
\label{sec:alternative}

A possible approach to improve the conditioning of the Floater--Hormann interpolants would be to constrain the derivatives at the endpoints, effectively replacing several of the Lagrange interpolants in Equation \ref{eq:fh_blend} with Hermite interpolants. Since the original problem does not include any information about the derivatives of $f$, they would need to be estimated from, e.g., one-sided finite difference formulas. Such formulas are themselves based on polynomial interpolants though, and suffer from ill-conditioning when the nodes are equispaced and the degree of the polynomial is high. This could be mitigated by using finite difference formulas derived from polynomial interpolants of degree less than $d$, but the issue remains that any finite difference formulas would complicate the construction of explicit basis functions. Perhaps then the conditioning of the Floater--Hormann interpolants could be improved by directly including some polynomial interpolants of degree less than $d$, with suitably modified blending functions to confine their influence to the ends of the interval.

Let $\phi^{(d)}_{i}(x)$ and $\psi^{(d)}_{i}(x)$ be the modifications of $\chi_{i, j}(x)$ that will be used to blend polynomial interpolants through fewer than $d + 1$ points at the lower and upper ends of the interval:
\begin{align}
\phi^{(d)}_{i}(x) & = \frac{(-1)^{d - i}}{(x - x_0)^{d - i}} \chi_{0, i}(x)
\label{eq:phi} \\
\psi^{(d)}_{i}(x) & = \frac{1}{(x - x_n)^{i - n + d}} \chi_{i, n}(x).
\label{eq:psi}
\end{align}
Further define the three index sets $I_1 := \set{d - e \dots d - 1}$, $I_2 := \set{0 \dots n - d}$, and $I_3 := \set{n - d + 1 \dots n - d + e}$. Then for any integers $0 \leq d \leq n$ and $0 \leq e \leq d$, the proposed interpolant $r^{(d, e)}(x)$ is a Floater--Hormann interpolant of degree $d$ with $e$ additional polynomial interpolants through fewer points at the lower and upper ends of the interval:
\begin{equation}
r^{(d, e)}(x) = \frac{\sum_{i \in I_1} \phi^{(d)}_{i}(x) p_{0, i}(x) + \sum_{i \in I_2} \chi_{i, i + d}(x) p_{i, i + d}(x) + \sum_{i \in I_3} \psi^{(d)}_{i}(x) p_{i, n}(x)}{\sum_{i \in I_1} \phi^{(d)}_{i}(x) + \sum_{i \in I_2} \chi_{i, i + d}(x) + \sum_{i \in I_3} \psi^{(d)}_{i}(x)}.
\label{eq:alt_blend}
\end{equation}
As before, let $t^{(d, e)}(x)$ be the numerator of $r^{(d, e)}(x)$. Using Equation \ref{eq:fh_bary} for the numerator of the Floater--Hormann interpolant, substituting the definitions of $\phi^{(d)}_{i}(x)$, $\psi^{(d)}_{i}(x)$ and $p_{i, j}(x)$ from Equations \ref{eq:phi}, \ref{eq:psi} and \ref{eq:lagrange}, and canceling common factors gives
\begin{align}
t^{(d, e)}(x) = & \sum_{i = d - e}^{d - 1} \frac{(-1)^{d - i}}{(x - x_0)^{d - i}} \sum_{j = 0}^{i} \frac{\omega_{0, j, i}}{x - x_j} y_j + \sum_{j = 0}^{n} \frac{\xi_{j}^{(d)}}{x - x_j} y_j \nonumber \\
& + \sum_{i = n - d + 1}^{n - d + e} \frac{(-1)^{i}}{(x - x_n)^{i - n + d}} \sum_{j = i}^{n} \frac{\omega_{i, j, n}}{x - x_j} y_j
\label{eq:alt_numerator}
\end{align}
Analogous to the Floater--Hormann weights, define the pair of functions
\begin{align}
\zeta_{j}^{(d, e)}(x) & = \sum_{i = \max(j, d - e)}^{d - 1} \frac{(-1)^{d - i} \omega_{0, j, i}}{(x - x_0)^{d - i}} \nonumber \\
& = -\frac{\omega_{0, j, u}}{x - x_0} \bigg( 1 - \frac{x_j - x_{u}}{x - x_0} \bigg( \dots \bigg( 1 - \frac{x_j - x_{l + 1}}{x - x_0} \bigg) \bigg) \bigg)
\label{eq:zeta_horner}
\end{align}
\begin{align} 
\eta_{j}^{(d, e)}(x) & = \sum_{i = n - d + 1}^{\min(j, n - d + e)} \frac{(-1)^{i} \omega_{i, j, n}}{(x - x_n)^{i - n + d}} \nonumber \\
& = (-1)^{l} \frac{\omega_{l, j, n}}{x - x_0} \bigg( 1 - \frac{x_j - x_{l}}{x - x_0} \bigg( \dots \bigg( 1 - \frac{x_j - x_{u - 1}}{x - x_0} \bigg) \bigg) \bigg)
\label{eq:eta_horner}
\end{align}
where $l$ and $u$ are the lower and upper indices of the respective summations, and the second forms of $\zeta_{j}^{(d, e)}(x)$ and $\eta_{j}^{(d, e)}(x)$ follow from the application of Horner's method. Exchanging the order of summations over $i$ and $j$ in Equation \ref{eq:alt_numerator} allows $t^{(d, e)}(x)$ to be written as
\begin{align*}
t^{(d, e)}(x) & = \sum_{j = 0}^{d - 1} \frac{\zeta_{j}^{(d, e)}(x)}{x - x_j} y_j + \sum_{j = 0}^{n} \frac{\xi_{j}^{(d)}}{x - x_j} y_j + \sum_{j = n - d + 1}^{n} \frac{\eta_{j}^{(d, e)}(x)}{x - x_j} y_j \\
& = \sum_{j = 0}^{n} \frac{\zeta_{j}^{(d, e)}(x) + \xi_{j}^{(d)} + \eta_{j}^{(d, e)}(x)}{x - x_j} y_j
\end{align*}
where the second equality uses the convention that a sum is zero whenever the upper index is less than the lower index. Given this form for the numerator, the denominator of the proposed interpolant is again found by requiring that a constant function be interpolated exactly, i.e., by setting all of the $y_j$ equal to one. This gives
\begin{align}
r^{(d, e)}(x) & = \sum_{j = 0}^{n} \bigg( \frac{\zeta_{j}^{(d, e)}(x) + \xi_{j}^{(d)} + \eta_{j}^{(d, e)}(x)}{x - x_j} \bigg/ \sum_{k = 0}^{n} \frac{\zeta_{k}^{(d, e)}(x) + \xi_{k}^{(d)} + \eta_{k}^{(d, e)}(x)}{x - x_k} \bigg) y_j 
\label{eq:alt_bary_full} \\
& = \sum_{j = 0}^{n} \beta^{(d, e)}_j(x) y_j
\label{eq:alt_basis}
\end{align}
for the form of the proposed interpolant analogous to Equation \ref{eq:fh_bary_full}, where $\beta^{(d, e)}_j(x)$ is the $j$th basis function.

While Equation \ref{eq:alt_bary_full} mimics the barycentric form of the Floater--Hormann interpolants, multiplying the numerator and denominator of Equation \ref{eq:alt_blend} by
\begin{equation}
\ell(x) = (-1)^{n - d + e} (x - x_0)^{e + 1} (x - x_1) \dots (x - x_{n - 1}) (x - x_n)^{e + 1}
\label{eq:alt_all_factors}
\end{equation}
reveals that the proposed interpolant is a rational function with numerator and denominator degrees at most $n + 2e$ and $n - d + 2e$. This means that the proposed interpolant cannot be written in the barycentric form of Berrut and Mittelmann \cite{1997berrut}, where the $x$ dependence occurs only through the factors $(x - x_j)^{-1}$. That said, the proposed interpolant can be made to resemble the barycentric form of Schneider and Werner \cite{1991schneider} by finding the full partial fraction decomposition of every term in Equation \ref{eq:alt_numerator}. This is not difficult to do by means of the residue method, but the result is found to be less numerically stable than Equation \ref{eq:alt_bary_full} and is not discussed further.

One of the outstanding qualities of the Floater--Hormann interpolants is the provable absence of poles on the real line. A slight modification of Floater and Hormann's theorem \cite{2007floater} is enough to show that the proposed interpolants share this property.
\begin{theorem}
For all $0 \leq d \leq n$ and $0 \leq e \leq d$, the rational interpolant $r^{(d, e)}(x)$ in Equation \ref{eq:alt_blend} has no poles in $\R$.
\label{thm:poles}
\end{theorem}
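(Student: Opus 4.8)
The plan is to adapt the proof of Theorem~1 of \cite{2007floater}: clear the factors $(x - x_j)^{-1}$ out of the interpolant and show that what remains in the denominator is a polynomial with no real zeros. Concretely, multiply the numerator and denominator of Equation~\ref{eq:alt_blend} by $\ell(x)$ from Equation~\ref{eq:alt_all_factors}. By the degree count already recorded, the denominator becomes a polynomial $D(x)$ of degree at most $n - d + 2e$, and it suffices to prove $D(x) \neq 0$ for every $x \in \R$.

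The first step is to recognize $\phi^{(d)}_i$ and $\psi^{(d)}_i$ as disguised instances of $\chi$. Comparing Equations~\ref{eq:phi} and \ref{eq:psi} with the definition~\ref{eq:fh_chi} shows that $\phi^{(d)}_i$ is exactly what $\chi$ would be for a window of $d+1$ consecutive nodes if $x_0$ were repeated $d - i + 1$ times, and $\psi^{(d)}_i$ what it would be if $x_n$ were repeated $i - n + d + 1$ times. Introduce the weakly increasing sequence $\tilde x_{-e} \le \cdots \le \tilde x_{n+e}$ with $\tilde x_{-e} = \cdots = \tilde x_0 = x_0$, $\tilde x_j = x_j$ for $1 \le j \le n - 1$, and $\tilde x_n = \cdots = \tilde x_{n+e} = x_n$. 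A short check then shows that the blending functions indexed by $I_1$, $I_2$, and $I_3$ in Equation~\ref{eq:alt_blend} are precisely the functions $(-1)^{s} \prod_{k \in W_s} (x - \tilde x_k)^{-1}$ for the consecutive blocks $W_s := \{s, s+1, \ldots, s+d\}$, $s = -e, \ldots, n - d + e$, and that $\ell(x) = (-1)^{n-d+e} \prod_{k=-e}^{n+e}(x - \tilde x_k)$. Hence
\begin{equation*}
D(x) = (-1)^{n-d+e} \sum_{s=-e}^{n-d+e} (-1)^{s} \prod_{\substack{k=-e\\k \notin W_s}}^{n+e} (x - \tilde x_k),
\end{equation*}
which has exactly the form of the denominator polynomial of a degree-$d$ Floater--Hormann interpolant, now on the coalesced sequence $\tilde x_{-e}, \ldots, \tilde x_{n+e}$. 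This is the slight modification of Floater and Hormann's theorem alluded to before the theorem statement.

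With $D$ in this form, one runs Floater and Hormann's sign analysis. The only coincidences among the $\tilde x_k$ lie inside the two end blocks, so the distinct-node cells are the gaps $(x_m, x_{m+1})$ for $0 \le m \le n - 1$ together with $(-\infty, x_0)$ and $(x_n, \infty)$; since $D$ is a polynomial it is enough to rule out zeros on each of these cells and at each node $x_m$. At $x = x_m$ every term whose block omits an index equal to $x_m$ vanishes, so only finitely many blocks survive, and a count of the negative factors $(x_m - \tilde x_k)$ with $\tilde x_k > x_m$ shows that all of the surviving terms carry the same sign, whence $D(x_m) \neq 0$; at $x_0$ and $x_n$ only a single block survives, which makes this immediate. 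On an open cell the same count shows that every block straddling the cell contributes a term of one fixed sign, and one then invokes Floater and Hormann's estimate to bound the contributions of the non-straddling blocks and conclude that $D$ keeps that sign throughout the cell.

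I expect this last point --- transplanting Floater and Hormann's estimate for the non-straddling blocks to the coalesced sequence --- to be the only delicate part; the reduction in the second paragraph and the evaluation at the nodes are routine bookkeeping. If a cleaner route is preferred, one can instead perturb the $e$ extra copies of $x_0$ and of $x_n$ to make all $n + 1 + 2e$ nodes distinct, apply Theorem~1 of \cite{2007floater} directly to the genuine Floater--Hormann interpolant on the perturbed nodes, and pass to the limit, using the non-vanishing of $D$ at the original nodes together with the lower bound on $|D|$ that Floater and Hormann's argument supplies uniformly on compact subsets of the cells.
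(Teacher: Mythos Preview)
Your proposal is correct and follows essentially the same route as the paper. The paper introduces exactly your coalesced sequence (writing $x_i = x_0$ for $i<0$ and $x_i = x_n$ for $i>n$), rewrites the cleared denominator as $\sum_i \mu_i(x)$ with $\mu_i(x) = \prod_{j<i}(x-x_j)\prod_{k>i+d}(x_k-x)$, asserts that Floater and Hormann's sign argument goes through verbatim for $x \in \R \setminus \{x_0,x_n\}$, and then checks $x_0$ and $x_n$ by hand, observing (as you do) that only the single extreme block survives there. Your hesitation about the ``non-straddling'' estimate is unnecessary: since the index gap $d+1$ exceeds $e$, one always has $\tilde x_i < \tilde x_{i+d+1}$ strictly, so the monotone-ratio step in \cite{2007floater} carries over without change, and neither your cell-by-cell bookkeeping nor the perturbation alternative is needed.
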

\begin{proof}
Multiply the numerator and denominator of $r^{(d, e)}(x)$ in Equation \ref{eq:alt_blend} by $\ell(x)$ in Equation \ref{eq:alt_all_factors}, and let $s^{(d, e)}(x)$ be the denominator. It is sufficient to show that $s^{(d, e)}(x) > 0$ for all $x \in \R$. To that end, define additional nodes $x_{i} = x_{0}$ for $i \in \set{-e, -e + 1, \dots , -1}$ and $x_{i} = x_{n}$ for $i \in \set{n, n + 1, \dots , n + e}$, define the index set $I := \set{-e, -e + 1, \dots , n + e}$, and define the functions
\begin{equation*}
\mu_{i}(x) = \prod_{\mathclap{j = -e}}^{i - 1} (x - x_j) \prod_{\mathclap{k = i + d + 1}}^{n + e} (x_k - x).
\end{equation*}
Then $s^{(d, e)}(x)$ can be written as
\begin{equation*}
s^{(d, e)}(x) = \sum_{i \in I} \mu_{i}(x).
\end{equation*}
At this point, the proof that $s^{(d, e)}(x) > 0$ for $x \in \R \backslash \set{x_0, x_n}$ is identical to that of Floater and Hormann \cite{2007floater} up to a relabeling of the indices. Their proof does not extend to $x \in \set{x_0, x_n}$ because of the assumption that all of the $x_i$ are distinct.

First consider $x = x_0$. Since $e \leq d$, the first factor in $\mu_{-e}(x)$ is $(x_j - x)$ for some $j > 0$, and $\mu_{-e}(x_0) > 0$. For all other $i > -e$, the first factor in $\mu_{i}(x)$ is $(x - x_{-e})$ and $\mu_{i}(x_0) = 0$. Hence
\begin{equation*}
s^{(d, e)}(x_0) = \sum_{i \in I} \mu_{i}(x_0) = \mu_{-e}(x_0) > 0.
\end{equation*}
The reasoning to show that $s^{(d, e)}(x_n) > 0$ involves the last factors in the $\mu_{i}(x)$ but is otherwise the same. \qed
\end{proof}

\begin{corollary}
For all $0 \leq d \leq n$ and $0 \leq e \leq d$, the rational interpolant $r^{(d, e)}(x)$ has no unattainable points.
\end{corollary}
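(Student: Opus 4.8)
The plan is to show that $r^{(d, e)}(x_j) = y_j$ at every node $x_j$; since Theorem \ref{thm:poles} already gives that $r^{(d, e)}$ has no poles, this is precisely the absence of unattainable points. Let $t^{(d,e)}(x)$ and $B(x)$ denote the numerator and denominator of the blend form \ref{eq:alt_blend}, so that $s^{(d, e)}(x) = \ell(x) B(x)$ and $P(x) := \ell(x) t^{(d,e)}(x)$ are the polynomials obtained in Section \ref{sec:alternative} by multiplying through by $\ell(x)$. Theorem \ref{thm:poles} gives $s^{(d, e)}(x) > 0$ on $\R$, so $s^{(d, e)}(x_j) \neq 0$ and $r^{(d, e)}(x_j) = P(x_j) / s^{(d, e)}(x_j)$ is finite; hence it suffices, for each fixed $j$, to prove that the polynomial $g(x) := P(x) - y_j\, s^{(d, e)}(x) = \ell(x)\big(t^{(d,e)}(x) - y_j B(x)\big)$ vanishes at $x_j$, for then $r^{(d, e)}(x_j) - y_j = g(x_j) / s^{(d, e)}(x_j) = 0$.

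The substantive step is to bound the order of the pole of $t^{(d,e)}(x) - y_j B(x)$ at $x_j$. Expanding it over the blocks of \ref{eq:alt_blend} as $\sum c(x)\big(p(x) - y_j\big)$, where $c$ is the blending function $\phi^{(d)}_{i}$, $\chi_{i, i+d}$, or $\psi^{(d)}_{i}$ and $p$ the associated local polynomial interpolant, two facts do all of the work: if a block's index range contains $j$ then $p(x_j) = y_j$, so $(x - x_j)$ divides $p(x) - y_j$ and cancels the simple pole at $x_j$ carried by $\chi_{i, i+d}$ or by the $(x - x_j)^{-1}$ factor of $\phi^{(d)}_{i}$ or $\psi^{(d)}_{i}$; and if the range excludes $j$ then that blending function has no pole at $x_j$ at all. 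Consequently, for an interior node $1 \le j \le n - 1$ --- where the extra factors $(x - x_0)^{-(d - i)}$ and $(x - x_n)^{-(i - n + d)}$ in $\phi^{(d)}_{i}$ and $\psi^{(d)}_{i}$ are regular near $x_j$ --- every summand, and therefore $t^{(d,e)} - y_j B$, is regular at $x_j$, while $\ell$ has a simple zero at $x_j$, so $g(x_j) = 0$. For $j = 0$ (the node $j = n$ is symmetric), the blocks $p_{0, i}$ with $i \in I_1$ that contain $x_0$ carry blending functions $\phi^{(d)}_{i}$ with a pole of order $(d - i) + 1 \le e + 1$ there, but multiplication by $p_{0, i}(x) - y_0$, which vanishes at $x_0$, reduces this to order at most $d - i \le e$; since $\ell$ vanishes to order $e + 1$ at $x_0$, again $g(x_0) = 0$.

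These two cases give $g(x_j) = 0$ for all $j$, whence $r^{(d, e)}(x_j) = y_j$ and the corollary follows; the case $e = 0$ recovers the analogous statement for the Floater--Hormann interpolants. I expect the one delicate point to be the endpoint bookkeeping: one must check that the several negative powers of $(x - x_0)$ built into the $\phi^{(d)}_{i}$ for the various $i \in I_1$, compounded with the pole of $\chi_{0, i}$, never combine --- after multiplication by $p_{0, i}(x) - y_0$ --- into a pole of order exceeding $e$ at $x_0$, so that the order-$(e + 1)$ zero of $\ell$ there always dominates; everything away from the two endpoints is routine. An alternative route argues from the barycentric-type form \ref{eq:alt_bary_full} by evaluating $\lim_{x \to x_j}(x - x_j)\, t^{(d,e)}(x)$ and the corresponding limit of the denominator, but the residues at $x_0$ and $x_n$ still require isolating the leading singular term of $\zeta^{(d, e)}_{0}(x)$, so the same care resurfaces.
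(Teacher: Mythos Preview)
Your proof is correct and rests on the same idea as the paper's: after clearing denominators with $\ell(x)$, the only surviving terms at a node $x_\alpha$ come from local interpolants that pass through $(x_\alpha, y_\alpha)$. The paper's execution is shorter because it works directly with the polynomial factors $\mu_i(x)$ from the proof of Theorem~\ref{thm:poles}, writing $r^{(d,e)}(x_\alpha) = \sum_i \mu_i(x_\alpha) q_i(x_\alpha) \big/ \sum_i \mu_i(x_\alpha)$ and observing that $\mu_i(x_\alpha) \neq 0$ forces $q_i(x_\alpha) = y_\alpha$; this handles interior nodes and the endpoints $x_0, x_n$ uniformly and entirely sidesteps the pole-order bookkeeping you flag as delicate.
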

\begin{proof}
For any $\alpha \in \set{0, 1, \dots , n}$, define the index set $J_\alpha := \set{i \in I: \mu_i(x_\alpha) \neq 0}$. Theorem \ref{thm:poles} requires that $J_\alpha$ be nonempty, since otherwise $\sum_{i \in I} \mu_{i}(x_\alpha) = 0$ and $r^{(d, e)}(x)$ would have a pole at $x_\alpha$. Let $q_i(x)$ be the $i$th local polynomial interpolant in Equation \ref{eq:alt_blend}, and observe from the definition of $r^{(d, e)}(x)$ and the fact that polynomial interpolants do not have unattainable points that $q_i(x_\alpha) = y_\alpha$ whenever $\mu_i(x_\alpha) \neq 0$. Then
\begin{equation*}
r^{(d, e)}(x_\alpha) = \frac{\sum_{i \in J_\alpha} \mu_i(x_\alpha) q_i(x_\alpha)}{\sum_{i \in J_\alpha}^{\vphantom{a}} \mu_i(x_\alpha)} = y_\alpha \frac{\sum_{i \in J_\alpha} \mu_i(x_\alpha)}{\sum_{i \in J_\alpha}^{\vphantom{a}} \mu_i(x_\alpha)} = y_\alpha
\end{equation*}
and $r^{(d, e)}(x)$ interpolates the data at $x_\alpha$. \qed
\end{proof}

\begin{figure}
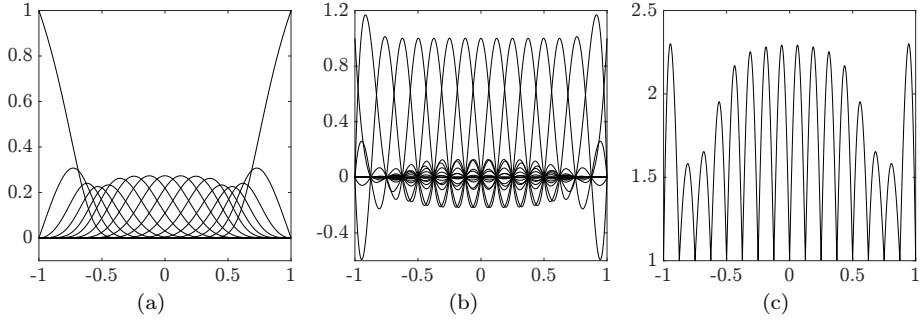

\center
\subfloat[]{%
	\label{subfig:m_blending}{%
		\includegraphics[height=37mm]{%
			figure_2a}}} \quad
\subfloat[]{%
	\label{subfig:m_basis}{%
		\includegraphics[height=37mm]{%
			figure_2b}}} \quad
\subfloat[]{%
	\label{subfig:m_lebesgue}{%
		\includegraphics[height=37mm]{%
			figure_2c}}}
\caption{\label{fig:m_example}For the proposed interpolant with $d, e = 8, 4$ on $17$ equispaced nodes in the interval $[-1, 1]$, (a) the blending functions $\mu_i(x) / \sum_{j \in I} \mu_j(x)$ in Equation \ref{eq:alt_blend}, (b) the basis functions $\beta^{(8, 4)}_j(x)$ in Equation \ref{eq:alt_basis}, and (c) the Lebesgue function $\lambda^{(8, 4)}(x)$.}
\end{figure}

Finally, $r^{(d, e)}(x)$ reproduces polynomials of degree at most $d - e$. If $f$ is such a polynomial, then $q_i(x) = f(x)$ for all $i \in I$ and
\begin{equation*}
r^{(d, e)}(x) = \frac{\sum_{i \in I} \mu_i(x) q_i(x)}{\sum_{i \in I}^{\vphantom{a}} \mu_i(x)} = f(x) \frac{\sum_{i \in I} \mu_i(x)}{\sum_{i \in I}^{\vphantom{a}} \mu_i(x)} = f(x).
\end{equation*}
This suggests that the proposed interpolant be compared with a Floater--Hormann interpolant of degree $d - e$. The counterpart to Figure \ref{fig:fh_example} would then be, e.g., Figure \ref{fig:m_example} where the behavior of the proposed interpolant with $d = 8$ and $e = 4$ is shown. Comparing the ends of the interval in the two figures reveals an increase in the number of blending functions $\mu_i(x) / \sum_{j \in I} \mu_j(x)$ in Figure \ref{subfig:m_blending}, the damping of the oscillations of the basis functions $\beta^{(8, 4)}_j(x)$ in Figure \ref{subfig:m_basis}, and a reduction in the Lebesgue function $\lambda^{(8, 4)}(x) = \sum_i |\beta^{(8, 4)}_{i}(x)|$ in Figure \ref{subfig:m_lebesgue}. That is, the proposed modification of the Floater--Hormann interpolants had the intended effect of improving the conditioning at the ends of the interval. Moreover, if the interpolation error is $O(h^{\delta + 1})$ where $h$ is the node spacing and $\delta$ is the degree of the local polynomial interpolants, then the proposed interpolant should have lower interpolation error on the interior of the interval than the corresponding Floater--Hormann interpolant. This is supported by the numerical results in the following section.

\section{Numerical results}
\label{sec:numeric}

The Floater--Hormann interpolants have the property that once the weights are known, the computational complexity to find the value of the interpolant at some $x$ is $O(n)$ \cite{2004berrut}. Ideally, any modification of the Floater--Hormann interpolants would have the same property. This is the case for $r^{(d, e)}(x)$, for which the additional computational complexity over a Floater--Hormann interpolant is $O(d e)$. This can be seen from the $O(d)$ values of $\zeta_{j}^{(d, e)}(x)$ and $\eta_{j}^{(d, e)}(x)$ in Equation \ref{eq:alt_bary_full}, each of which requires $O(e)$ operations to calculate using Equations \ref{eq:zeta_horner} and \ref{eq:eta_horner}. Practically, while the additional computational complexity is small only for small values of $e$, this is sufficient for the cases of interest.

This section specifically considers the behavior of $r^{(d, e)}(x)$ as an approximant for three variations of Runge's function $f(x) = 1 / (1 + x^2)$ on equispaced nodes. The first uses the interval $[-5, 5]$ since this appears often in the literature \cite{2007floater,2013klein}. The second breaks the bilateral symmetry and uses the interval $[-3, 7]$ to better represent general $f$. The third uses the interval $[-5, 5]$, but perturbs the $y_i$ with Gaussian noise with $\sigma = 10^{-8}$ to simulate the measurement error when $r^{(d, e)}(x)$ is used as an interpolant. These examples are not comprehensive and the results of this section certainly do not carry the weight of proof, but they do seem to be representative of the performance of $r^{(d, e)}(x)$ in practice.

\begin{figure}
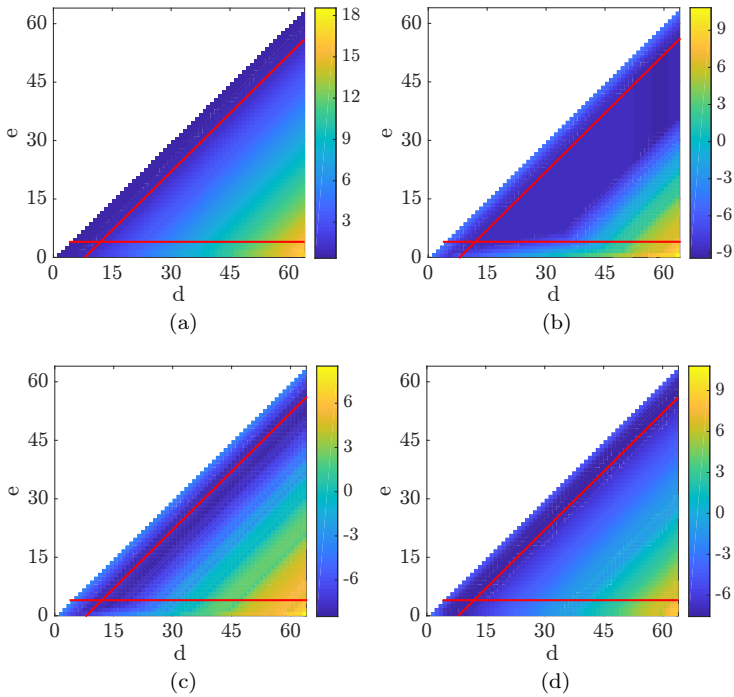

\center
\subfloat[]{%
	\label{subfig:lebesgue_de}{%
		\includegraphics[height=39mm]{%
			figure_3a}}} \quad
\subfloat[]{%
	\label{subfig:runge_s_de}{%
		\includegraphics[height=39mm]{%
			figure_3b}}} \quad
\subfloat[]{%
	\label{subfig:runge_a_de}{%
		\includegraphics[height=39mm]{%
			figure_3c}}} \quad
\subfloat[]{%
	\label{subfig:runge_sn_de}{%
		\includegraphics[height=39mm]{%
			figure_3d}}}
\caption{\label{fig:de_scan}For $r^{(d, e)}(x)$ and $n = 64$ with equispaced nodes, (a) the Lebesgue constant, (b) $L^\infty$ error of $f(x) = 1 / (1 + x^2)$ on the interval $[-5, 5]$, (c) $L^\infty$ error of $f(x) = 1 / (1 + x^2)$ on the interval $[-3, 7]$, (d) $L^\infty$ error of $f(x) = 1 / (1 + x^2)$ on the interval $[-5, 5]$ with Guassian noise with $\sigma = 10^{-8}$. The base $10$ logarithm of the values is reported. The solid red lines in each figure are given by $e = 4$ and $e = d - 8$.}
\end{figure}

Figure \ref{fig:de_scan} considers the behavior of $r^{(d, e)}(x)$ for $n = 64$ as a function of $d$ and $e$. The Lebesgue constant is shown in Figure \ref{subfig:lebesgue_de}, and increases exponentially with $d$ for any fixed $e$ (this has been proven for $e = 0$ \cite{2012bos}) with the exception of the region $e \geq d - 5$ where the Lebesgue constant is small and nearly constant. This supports the supposition that the behavior of the Lebesgue constant is dominated by the polynomial interpolants of degree at most $d - e$ at the ends of the interval. The $L^{\infty}$ error for Runge's function on the interval $[-5, 5]$ is shown in Figure \ref{subfig:runge_s_de}, and is small and nearly constant in the region defined by $e \leq d - 5$, $e \geq (d - 5) / 5$ and $e \geq d - 28$. While this intersects $e = 0$ at a single point, increasing the value of $e$ dramatically expands the useful interval of $d$ and helps to stabilize the behavior of the approximant. The $L^{\infty}$ error for Runge's function on the interval $[-3, 7]$ is shown in Figure \ref{subfig:runge_a_de}, and is small in the region defined by $e \leq d - 5$, $e \geq (d - 5) / 5$ and $e \geq d - 15$. This better represents general $f$, and while the useful interval of $d$ is smaller the behavior is essentially the same as for Figure \ref{subfig:runge_s_de}. Finally, the $L^{\infty}$ error for Runge's function on the interval $[-5, 5]$ with Gaussian noise is shown in Figure \ref{subfig:runge_sn_de}, and is small and nearly constant in the region defined by $e \leq d - 4$ and $e \geq d - 8$. The resemblance to the Lebesgue constant in Figure \ref{subfig:lebesgue_de} is to be expected, since the Lebegsue constant effectively indicates the sensitivity of the approximant to measurement errors.

One advantage of the Chebyshev and spline interpolants is that they have few adjustable parameters---the absence of trade-offs in the parameter values makes using them a straightforward affair. For the Floater--Hormann interpolants, there is the temptation to increase $d$ to reduce the interpolation error, but this carries the risk of amplifying the rounding error. While there has been some progress on finding the optimal $d$ when approximating analytic functions \cite{2012guttel}, this requires knowledge of the closest singularity in the complex plane. The proposed interpolant apparently complicates the situation further by introducing a second adjustable parameter $e$. Observe though that the line $e = d - 8$ intersects the regions of small $L^{\infty}$ error in Figures \ref{subfig:runge_s_de}, \ref{subfig:runge_a_de} and \ref{subfig:runge_sn_de}, and that most of stability gains with increasing $e$ have already been achieved on the line $e = 4$. These intersect at the point $d, e = 12, 4$, which seems to offer a reasonable balance of accuracy, stability and computational expense for general use. When $r^{(d, e)}(x)$ is used as an approximant and the measurement error is negligible, increasing $d$ to $12 \leq d \leq 16$ can improve the approximation rate at the cost of some stability.

\begin{figure}
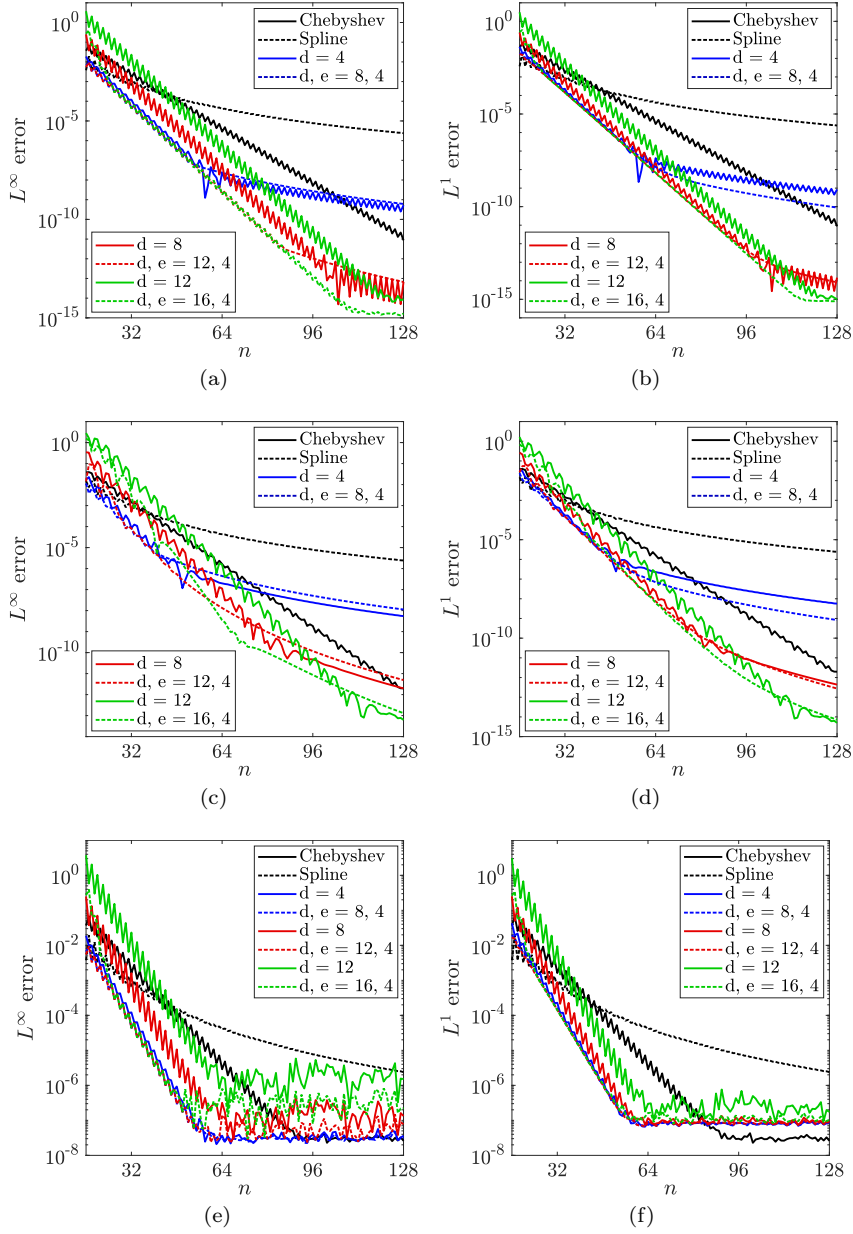

\center
\subfloat[]{%
	\label{subfig:runge_s_lf}{%
		\includegraphics[height=47mm]{%
			figure_4a}}} \quad
\subfloat[]{%
	\label{subfig:runge_s_l1}{%
		\includegraphics[height=47mm]{%
			figure_4b}}} \quad
\subfloat[]{%
	\label{subfig:runge_a_lf}{%
		\includegraphics[height=47mm]{%
			figure_4c}}} \quad
\subfloat[]{%
	\label{subfig:runge_a_l1}{%
		\includegraphics[height=47mm]{%
			figure_4d}}} \quad
\subfloat[]{%
	\label{subfig:runge_sn_lf}{%
		\includegraphics[height=47mm]{%
			figure_4e}}} \quad
\subfloat[]{%
	\label{subfig:runge_sn_l1}{%
		\includegraphics[height=47mm]{%
			figure_4f}}}
\caption{\label{fig:comparison}$L^{\infty}$ and $L^{1}$ errors of eight approximants as functions of $n$ for $f(x) = 1 / (1 + x^2)$. The solid black line is for a polynomial through Chebyshev points of the second kind, the dashed black line is for a cubic spline, the remaining solid lines are for $r^{(d)}(x)$, and the remaining dashed lines are for $r^{(d, e)}(x)$. (a) and (b) use the interval $[-5, 5]$, (c) and (d) use the interval $[-3, 7]$, and (e) and (f) use the interval $[-5, 5]$ with Guassian noise with $\sigma = 10^{-8}$.}
\end{figure}

The performance of the proposed approximant as a function of $n$ is compared with that of Chebyshev, cubic spline, and Floater--Hormann approximants for Runge's function in Figure \ref{fig:comparison}. Figures \ref{subfig:runge_s_lf} and \ref{subfig:runge_s_l1} use the interval $[-5, 5]$, Figures \ref{subfig:runge_a_lf} and \ref{subfig:runge_a_l1} use the interval $[-3, 7]$, and Figures \ref{subfig:runge_sn_lf} and \ref{subfig:runge_sn_l1} use the interval $[-5, 5]$ with Gaussian noise. There are a number of observations to be made from these figures. First, the Chebyshev and cubic spline approximants show the expected exponential and power law convergence. The Floater--Hormann and proposed approximants initially exhibit exponential convergence and afterward power law convergence, with the transition occurring at an $n$ that increases with $d$. Although Platte, Trefethen and Kuijlaars \cite{2011platte} have proven that there is no approximant on equispaced nodes that is stable and converges exponentially in the limit $n \rightarrow \infty$, this is not as serious an issue as one might believe---the Floater--Hormann and proposed approximants have errors that are often substantially less than and reach the level of machine precision well before the Chebyshev approximant. One conclusion then is that there can be some situations where the Floater--Hormann and proposed approximants on equispaced nodes are preferable to the Chebyshev approximant, despite the proven properties \cite{2013trefethen} of the latter.

Second, the proposed approximant $r^{(d, e)}(x)$ seems more stable than the Floater--Hormann approximant $r^{(d - e)}(x)$ in three respects. First, the approximantion rate of a Floater--Hormann approximant depends on the parity of $n$ \cite{2007floater}, as is visible from the oscillations of the $L^{\infty}$ and $L^{1}$ errors in Figures \ref{subfig:runge_s_lf} and \ref{subfig:runge_s_l1}. The same oscillations are strongly damped for the $L^{\infty}$ error and almost completely absent for the $L^{1}$ error of the proposed approximant. Second, the $L^{\infty}$ error of the Floater--Hormann approximants in the interval of exponential convergence increases exponentially with $d$ for $d > 4$, as is visible from the vertical offset of the curves in Figures \ref{subfig:runge_s_lf}, \ref{subfig:runge_a_lf} and \ref{subfig:runge_sn_lf}. The $L^{\infty}$ error of the proposed approximants in the same interval instead collapses onto a single curve in Figures \ref{subfig:runge_s_lf} and \ref{subfig:runge_sn_lf}, with the exception of $n < 24$ for $d, e = 16, 4$. Third, the amplification of the random errors in Figures \ref{subfig:runge_sn_lf} and \ref{subfig:runge_sn_l1} is smaller for $r^{(d, e)}(x)$ than for $r^{(d - e)}(x)$, sometimes by an order of magnitude. The reason for this is not clear, since Figure \ref{subfig:lebesgue_de} shows that $r^{(d, e)}(x)$ and $r^{(d - e)}(x)$ have similar Lebesgue constants.

Third, the proposed approximants appear to have an advantage over the Floater--Hormann approximants with regard to error. The $L^{\infty}$ and $L^{1}$ errors of $r^{(d, e)}(x)$ are often just above and just below the respective errors of $r^{(d - e)}(x)$ in the interval of power law convergence, as is visible in Figures \ref{subfig:runge_s_lf}, \ref{subfig:runge_s_l1}, \ref{subfig:runge_a_lf} and \ref{subfig:runge_a_l1}. Since the $L^{\infty}$ error bounds the pointwise error from above, the Floater--Hormann approximants are preferable here. That said, the proposed approximants can have smaller approximation errors by several orders of magnitude in the interval of exponential convergence, and apparently amplify the random errors substantially less as well. This arguably makes the proposed approximants preferable for the case of general $f$ and $n$.

\begin{table*}
\center
\caption{\label{tbl:runge_compare}$L^{\infty}$ and $L^{1}$ errors of $r^{(d)}(x)$ and $r^{(d, e)}(x)$ on the interval $[-5, 5]$ with $n$ equispaced nodes for $f(x) = 1 / (1 + x^2)$. For $r^{(d)}(x)$, $d$ is the optimal value reported by Floater and Hormann \cite{2007floater}. For $r^{(d, e)}(x)$, $d, e = 14, 4$ except when $d$ is constrained by $n$.}
\begin{tabular}{r r l l r l l}
\hline
\hline \noalign{\smallskip}
 \multicolumn{1}{r}{$n$} &
 \multicolumn{1}{r}{$d$} &
 \multicolumn{1}{c}{$L^\infty$, $r^{(d)}(x)$} &
 \multicolumn{1}{c}{$L^1$, $r^{(d)}(x)$} &
 \multicolumn{1}{r}{$d, e$} &
 \multicolumn{1}{c}{$L^\infty$, $r^{(d, e)}(x)$} &
 \multicolumn{1}{c}{$L^1$, $r^{(d, e)}(x)$} \\
\hline \noalign{\smallskip}
  10 & 0 & $3.606 \times 10^{-2}$ & $1.601 \times 10^{-1}$ & 10, 4 & $3.005 \times 10^{-2}$ & $1.243 \times 10^{-1}$ \\
  20 & 1 & $1.536 \times 10^{-3}$ & $6.656 \times 10^{-3}$ & 14, 4 & $1.674 \times 10^{-3}$ & $4.519 \times 10^{-3}$ \\
  40 & 3 & $4.307 \times 10^{-6}$ & $1.306 \times 10^{-5}$ & 14, 4 & $3.463 \times 10^{-6}$ & $1.220 \times 10^{-5}$ \\
  80 & 7 & $2.038 \times 10^{-10}$ & $8.003 \times 10^{-11}$ & 14, 4 & $1.214 \times 10^{-11}$ & $4.684 \times 10^{-11}$ \\
  160 & 10 & $1.887 \times 10^{-15}$ & $9.230 \times 10^{-16}$ & 14, 4 & $1.887 \times 10^{-15}$ & $9.226 \times 10^{-16}$ \\
\hline
\hline
\end{tabular}
\end{table*}

From Figure \ref{fig:comparison}, the question of the optimal $d$ for a Floater--Hormann approximant with a fixed $n$ seems to involve (at least for Runge's function) finding the smallest $d$ where the interval of exponential convergence includes $n$---further increasing $d$ can displace the entire curve vertically and increase the approximation error. The advantage of the proposed approximant then is that for modest $e$, the value of $d$ can be safely increased and the interval of exponential convergence expanded without adversely affecting the approximation error. While this sometimes results in smaller overall errors, the more significant practical advantage is that this reduces the necessity of adjusting $d$ to find the optimal value. For example, Table \ref{tbl:runge_compare} reproduces and expands upon a table in Floater and Hormann \cite{2007floater} that reports the optimal values of $d$ for Runge's function with $n$ equispaced nodes on the interval $[-5, 5]$. Since $r^{(d)}(x)$ and $r^{(d, e)}(x)$ are used as approximants and the measurement error is minimal, $d, e = \min(14, n), 4$ is used for the proposed approximant instead of the more conservative $d, e = 12, 4$ suggested above. Observe that even without adjusting $d$ and $e$, the $L^{1}$ error of the proposed approximant is less than that of the optimal Floater--Hormann approximant for every $n$ in the table, and the $L^{\infty}$ error is less than or equal to that of the optimal Floater--Hormann approximant for every $n$ with the exception of $n = 20$. For $n = 80$, the $L^{\infty}$ error of the proposed approximant is more than an order of magnitude smaller. At the very least then, the proposed approximants could be useful when finding the optimal value of $d$ for a Floater--Hormann approximant would be difficult or time consuming.

\section{Conclusion}
\label{sec:conclusion}

If one is presented with data on equispaced nodes and desires to interpolate further values, the Floater--Hormann family of rational interpolants is a good choice. They are infintely smooth, more stable than the polynomial interpolant of minimum degree, and often more accurate than cubic spline interpolants. That said, the Floater--Hormann interpolants are a family, and one immediately encounters the question of which one to use in practice. There are at least three answers available in the literature. First, Floater and Hormann \cite{2007floater} seem to suggest using a small value of $d$ (perhaps $d = 3$) and increasing $n$ until the desired accuracy is achieved. This is a conservative approach, and appears to be widely used \cite{2007press}. Second, G\"uttel and Klein \cite{2012guttel} describe a procedure whereby $d$ is given as a function of $n$ depending on the analyticity of $f$. This approach is elegant, but is also more complicated and relies on knowledge of $f$ that is not always available. Third, Klein \cite{2013klein} proposed extrapolating to points outside of the interval and constructing Floater--Hormann interpolants on the extended set of points. Certain published results suggest that these Extended Floater--Hormann interpolants do not suffer the usual drawbacks from high values of $d$, but others \cite{2017decamargo} suggest that the extrapolation process is a source of significant instability. Klein explicitly ignored this source of error in his analysis.

The rational interpolants proposed in Equations \ref{eq:alt_blend} and \ref{eq:alt_bary_full} are modifications of the Floater--Hormann interpolants that blend additional local polynomial interpolants at the ends of the interval. This two-parameter family initially appears to make the above question more difficult to answer, but various numerical examples suggest that a narrow interval of parameter values offers a good balance of accuracy, stability and computational expense. While the accuracy is comparable to that of the Floater--Hormann interpolant with optimal $d$, the proposed interpolants achieve this for constant $d$ and $e$. This is envisioned as simplifying the use of the rational interpolants in practical contexts, where the user might not have enough knowledge of the function $f$ to use a more sophisticated alternative.


%

\end{document}